\newcounter{minutes}\setcounter{minutes}{\time}
\newcounter{hours}\setcounter{hours}{\time}
\title{Lipschitz conditions and the distance ratio metric }
\author{Slavko Simi\'c}
\author{Matti Vuorinen}
\address{ Mathematical Institute SANU, Kneza Mihaila 36, 11000
Belgrade, Serbia} \email{ ssimic@turing.mi.sanu.ac.rs}
\address{Department of Mathematics and Statistics, University of Turku, 20014 Turku,
Finland} \email{vuorinen@utu.fi}
\keywords{Distance-ratio metric, Lipschitz constants, M\"obius
transformation} \subjclass[2010]{51M10(30C20)}
\newtheorem{theorem}[equation]{Theorem}
\newtheorem{lemma}[equation]{Lemma}
\newtheorem{nonsec}[equation]{}
\newcommand{\beq}{\begin{equation}}
\newcommand{\eeq}{\end{equation}}
\newcommand{\B}{\mathbb{B}^2}
\newcommand{\Rn}{ {\mathbb{R}^n} }
\numberwithin{equation}{section}
\begin{document}

\def\thefootnote{}
\footnotetext{ \texttt{\tiny File:~\jobname .tex,
          printed: \number\year-\number\month-\number\day,
          \thehours.\ifnum\theminutes<10{0}\fi\theminutes}
} \makeatletter\def\thefootnote{\@arabic\c@footnote}\makeatother


\maketitle

\begin{abstract}
We give study the
Lipschitz continuity of M\"obius transformations of a punctured disk
onto another punctured disk
with respect to the distance ratio metric.
\end{abstract}

\pagestyle{plain}

\section{Introduction}

During the past thirty years the theory of quasiconformal maps has been studied
in various contexts such as in
Euclidean, Banach, or even metric spaces. It has turned out that while some classical tools based on conformal invariants, real analysis and measure theory are no longer useful beyond the Euclidean context, the notion of a metric space and related notions
still provide a useful conceptual
framework. This has led to the study of the geometry defined
by various metrics and to the key role of metrics in recent
theory of quasiconformality. See e.g. \cite{ccq,himps,hpww,k,rt,rt2,free}.

{\bf Distance ratio metric.} \label{drm1.8} One of these metrics
is the distance ratio metric.
For a subdomain $G
\subset {\mathbb R}^n\,$ and for $x,y\in G$ the {\em distance
ratio metric $j_G$} is defined by
\begin{equation} \label{1.1}
 j_G(x,y)=\log \left( 1+\frac{|x-y|}{\min \{d_G(x),d_G(y) \} } \right)\,,
\end{equation}
where $d_G(x)$ denotes the Euclidean distance from $x$ to
$\partial G$. If $G_1 \subset G$ is a proper subdomain then for $x,y \in G_1 $ clearly
\begin{equation} \label{1.1b} 
j_G(x,y) \le j_{G_1}(x,y)  \,.
\end{equation}
Moreover, the numerical value of the metric is highly sensitive to
boundary variation, the left and right sides of \eqref{1.1b} are not
comparable even if $G_1= G \setminus \{p\}, p \in G \,.$

The distance ratio metric was introduced by F.W. Gehring and B.P.
Palka \cite{gp} and in the above, simplified, form by M. Vuorinen \cite{vu85}
and it is frequently used in the study of hyperbolic type metrics
\cite{himps} and geometric theory of functions. It is a basic fact that the above
$j$-metric is closely related to the hyperbolic metric both for the unit
ball ${\mathbb B}^n \subset {\mathbb R}^n$ and for the Poincar\'e half-space ${\mathbb H}^n\,,$ \cite{v}.

{\bf Quasi-invariance of $j_G$.} \label{qinv}

Given domains $G,G' \subset {\mathbb R}^n$ and an open continuous mapping $f: G \to G'$ with $fG \subset G'$ we consider the
following condition: there exists a constant $C \ge 1$ such that
for all $x,y\in G$ we have
\begin{equation} \label{jlip}
j_{G'}(f(x), f(y)) \le C j_G(x,y) \,,
\end{equation}
or, equivalently, that the mapping
$$  f:(G, j_G) \to (G',j_{G'}) $$
between metric spaces is Lipschitz continuous with the Lipschitz
constant $C\,.$

The hyperbolic metric in the unit ball or half space
are M\"obius invariant.
However,
the distance
ratio metric $j_G$ is not invariant under M\"obius transformations.
Therefore, it is natural to ask what the Lipschitz constants are
for these metrics under conformal mappings or M\"obius
transformations in higher dimension. F.\,W. Gehring 
and B.\,G. Osgood proved that these metrics are not changed by
more than a factor $2$ under M\"obius transformations, see
\cite[proof of Theorem 4]{go}:

\begin{theorem} \label{gplem}
If $D$ and $D'$ are proper subdomains of $\Rn$ and if $f$ is a
M\"obius transformation of $D$ onto $D'$, then for all $x,y\in D$
$$\frac12 j_{D}(x,y)\leq j_{D'}(f(x),f(y))\leq 2j_D(x,y).$$

\end{theorem}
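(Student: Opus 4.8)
The plan is to reduce the two-sided statement to its right-hand half and then to compute with the explicit form of a Möbius map. By symmetry it is enough to prove
\[
j_{D'}(f(x),f(y))\le 2\,j_D(x,y)\qquad(x,y\in D),
\]
for then applying this inequality to the Möbius map $f^{-1}\colon D'\to D$ at the points $f(x),f(y)$ yields $j_D(x,y)\le 2\,j_{D'}(f(x),f(y))$, which is the left-hand estimate. After relabelling $x,y$ I may assume $d_{D'}(f(x))\le d_{D'}(f(y))$, so that the minimum on the target side is attained at $f(x)$; writing $r=|x-y|/\min\{d_D(x),d_D(y)\}$, I have $j_D(x,y)=\log(1+r)$, and the whole assertion collapses to the single scalar inequality
\[
1+\frac{|f(x)-f(y)|}{d_{D'}(f(x))}\le (1+r)^2 ,
\]
since taking logarithms then produces the factor $2$.

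The main tool is the two-point distortion identity for Möbius transformations. If $f(\infty)=\infty$ then $f$ is a similarity and preserves $j_D$ exactly, so I may assume $f$ has a finite pole $a:=f^{-1}(\infty)$; writing $f=\sigma\circ\iota$ with $\iota$ the inversion in a sphere centred at $a$ and $\sigma$ a similarity, one gets $|f'(p)|=\lambda\,|p-a|^{-2}$ for a constant $\lambda>0$ together with the identity $|f(p)-f(q)|=\sqrt{|f'(p)|\,|f'(q)|}\,|p-q|$ (the source of cross-ratio invariance), valid for all $p,q$. Because $D'$ is a \emph{proper} subdomain of $\Rn$ we have $\infty\notin D'$, hence $a\notin D$; consequently $|y-a|\ge d_D(y)$ and $|x-p|\ge d_D(x)$ for every $p\in\partial D$. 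Since $f$ extends to a homeomorphism of $\overline{\Rn}$ carrying $\partial D$ onto $\partial D'$, the identity turns the boundary distance into
\[
d_{D'}(f(x))=\inf_{p\in\partial D}|f(x)-f(p)|=\sqrt{\lambda\,|f'(x)|}\;\inf_{p\in\partial D}\frac{|x-p|}{|p-a|}.
\]

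The crux of the argument, and the step I expect to be the real obstacle, is a lower bound for this infimum, i.e.\ a lower bound for $d_{D'}(f(x))$. Here I use only the triangle inequality $|p-a|\le|p-x|+|x-a|$ and the fact that $|x-p|\ge d_D(x)$ for boundary points $p$: the function $t\mapsto t/(t+|x-a|)$ is increasing, so
\[
\frac{|x-p|}{|p-a|}\ge\frac{|x-p|}{|x-p|+|x-a|}\ge\frac{d_D(x)}{d_D(x)+|x-a|},
\]
and therefore $\inf_{p\in\partial D}|x-p|/|p-a|\ge d_D(x)/\bigl(d_D(x)+|x-a|\bigr)$. Substituting this into the previous display, the factor $|f'(x)|$ cancels against the numerator $|f(x)-f(y)|=\sqrt{|f'(x)|\,|f'(y)|}\,|x-y|$, and since $\sqrt{|f'(y)|/\lambda}=|y-a|^{-1}$ I am left with the purely Euclidean estimate
\[
\frac{|f(x)-f(y)|}{d_{D'}(f(x))}\le \frac{|x-y|}{|y-a|}\,\Bigl(1+\frac{|x-a|}{d_D(x)}\Bigr).
\]

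It remains to bound the right-hand side by $2r+r^2$, after which $1+2r+r^2=(1+r)^2$ finishes the proof. This is elementary: $|x-a|\le|x-y|+|y-a|$ gives $|x-a|/|y-a|\le 1+|x-y|/d_D(y)$ (using $|y-a|\ge d_D(y)$), and $|x-y|/|y-a|\le|x-y|/d_D(y)$, so the right-hand side is at most
\[
\frac{|x-y|}{d_D(y)}+\frac{|x-y|}{d_D(x)}+\frac{|x-y|^2}{d_D(x)\,d_D(y)}\le r+r+r^2 ,
\]
because each quotient $|x-y|/d_D(x)$, $|x-y|/d_D(y)$ is at most $r$ and $d_D(x)\,d_D(y)\ge\min\{d_D(x),d_D(y)\}^2$. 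The only points needing care are the degenerate ones — the similarity case $f(\infty)=\infty$, the verification that the pole lies outside $D$, and the harmless coincidences among $x,y,a$ and the boundary points — all of which are routine.
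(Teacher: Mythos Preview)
The paper does not itself prove this theorem; it is quoted as a known result of Gehring and Osgood with a pointer to \cite[proof of Theorem~4]{go}, so there is no in-paper argument to compare against. Your proof is correct and is, in essence, the classical one: reduce to a single inequality via $f^{-1}$, invoke the M\"obius two-point identity $|f(p)-f(q)|=\lambda\,|p-q|/(|p-a|\,|q-a|)$ with pole $a=f^{-1}(\infty)\notin D$, bound $d_{D'}(f(x))$ from below by the triangle inequality, and collect terms to reach $(1+r)^2$.

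One detail deserves to be made explicit rather than swept into the closing ``routine'' clause. When $D$ is unbounded, $\infty$ lies in the spherical boundary $\partial_{\overline{\Rn}}D$, and $f(\infty)$ is then a \emph{finite} point of $\partial D'$; your displayed identity $d_{D'}(f(x))=\inf_{p\in\partial D}|f(x)-f(p)|$ is only correct if the infimum is taken over the spherical boundary (or, equivalently, over $\partial D\cup\{\infty\}$), with the limiting convention $|x-p|/|p-a|\to 1$ as $p\to\infty$. Your lower bound $d_D(x)/(d_D(x)+|x-a|)\le 1$ covers this limiting value as well, so the chain of inequalities survives, but a reader could legitimately pause there.
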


It is easy to see that for a M\"obius transformation $f : {\mathbb
B}^n \to  {\mathbb B}^n$ with $f(0) \neq 0,$  and $ x,y \in
{\mathbb B}^n, x\ne y,$ the $j_{ {\mathbb B}^n}$ distances need
not be the same. On the other hand, the next theorem from \cite{svw}, conjectured in \cite{kvz}, yields a sharp form of Theorem \ref{gplem} for
M\"obius automorphisms of the unit ball.

\begin{theorem} \label{kvzconj} A M\"obius transformation
$f: {\mathbb B}^n \to {\mathbb B}^n= f({\mathbb B}^n)$ satisfies
$$
j_{{\mathbb B}^n}(f(x),f(y)) \le (1+ |f(0)|) j_{{\mathbb
B}^n}(x,y)
$$
for all $x,y \in {\mathbb B}^n \,.$ The constant is best possible.
\end{theorem}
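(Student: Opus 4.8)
The plan is to put $f$ into a normal form, extract exact formulas from the M\"obius invariance of the hyperbolic metric, reduce the metric inequality to a one-sided ``paired'' estimate whose worst case is infinitesimal and sits at the centre, and finally grind out that estimate. First I would normalize. Writing $b=f(0)$ and $t=|b|=|f(0)|$ (we may assume $t>0$, else $f$ is orthogonal and the claim is trivial), the canonical M\"obius involution $T_b$ of $\Bn$ interchanging $0$ and $b$ has the property that $T_b\circ f$ fixes $0$, hence $T_b\circ f\in O(n)$, so $f=T_b\circ R$ with $R$ orthogonal. Orthogonal maps preserve both $|x-y|$ and $|x|$ and are therefore $j_{\Bn}$-isometries, so $j_{\Bn}(f(x),f(y))=j_{\Bn}(T_b(Rx),T_b(Ry))$; as $Rx,Ry$ range over $\Bn$ it suffices to prove the bound for the involution $f=T_a$ with $a=b$, $|a|=t$.

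Next I would record exact formulas. Using the M\"obius invariance of $\sinh^2(\rho(x,y)/2)=|x-y|^2/((1-|x|^2)(1-|y|^2))$, first with the second point equal to $a$ (so that $T_a(a)=0$) and then in general, and setting
\[ Q_z=\sqrt{\,1-2\langle z,a\rangle+t^2|z|^2\,}=\Big|\,t z-\tfrac{a}{t}\,\Big|, \]
so that $Q_z^2-|z-a|^2=(1-t^2)(1-|z|^2)$, one obtains
\[ |T_a x-T_a y|=\frac{(1-t^2)\,|x-y|}{Q_xQ_y},\qquad 1-|T_ax|=\frac{(1-|x|^2)(1-t^2)}{Q_x\,(Q_x+|x-a|)}. \]
Dividing gives the key quantity $\dfrac{|T_ax-T_ay|}{1-|T_ax|}=\dfrac{|x-y|\,(Q_x+|x-a|)}{Q_y\,(1-|x|^2)}$. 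The expression $Q_z=|tz-a/t|$ is an affine function of $z$, so $Q_z$ is $t$-Lipschitz, a fact I will use below.

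The third step reduces the theorem to a single scalar estimate. From $j_{\Bn}(u,v)=\max\{\log(1+\tfrac{|u-v|}{1-|u|}),\log(1+\tfrac{|u-v|}{1-|v|})\}$ it suffices to prove, for all $x,y$, the one-sided inequality $(\star)$: $\log\!\big(1+\tfrac{|T_ax-T_ay|}{1-|T_ax|}\big)\le(1+t)\log\!\big(1+\tfrac{|x-y|}{1-|x|}\big)$; interchanging $x$ and $y$ then gives the companion estimate, and taking the maximum of the two pairs yields $j_{\Bn}(T_ax,T_ay)\le(1+t)\,j_{\Bn}(x,y)$. A short infinitesimal computation both pins down the constant and proves sharpness: conformality gives $|T_a'(x)|=(1-|T_ax|^2)/(1-|x|^2)$, so the local Lipschitz ratio of $j_{\Bn}$ at $x$ is $(1+|T_ax|)/(1+|x|)$, which equals $1+t$ at $x=0$ (since $T_a(0)=a$) and tends to $1$ as $|x|\to1$; taking $x=0$ and $y\to0$ forces the ratio $j_{\Bn}(T_ax,T_ay)/j_{\Bn}(x,y)\to1+t$, so no smaller constant is admissible.

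The main obstacle is $(\star)$ itself. After the substitution above it becomes the explicit inequality
\[ 1+\frac{|x-y|\,(Q_x+|x-a|)}{Q_y\,(1-|x|^2)}\le\Big(1+\frac{|x-y|}{1-|x|}\Big)^{1+t}, \]
in which $Q_x$ and $|x-a|$ are determined by $x$, while $Q_y$ still carries the direction of $y$. I would remove that dependence using the Lipschitz bound $Q_y\ge Q_x-t\,|x-y|$ (valid whenever the right side is positive, the complementary range being handled crudely since there the right-hand side is already large), reducing matters to a three-parameter inequality in $|x|$, $|x-a|$ and $\ell=|x-y|$ for fixed $t$; monotonicity in $|x-a|$ should let me push $x$ onto the diameter through $a$, cutting this to a two-parameter, and ultimately one-variable, calculus problem. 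The delicate point is precisely that replacing $Q_y$ by its lower bound may be too lossy away from the extremal regime $\ell\to0$; so one must either check that the crude bound still closes the estimate, or instead argue that the extremizing $y$ is collinear with $x$ and $a$ and work with the honest value of $Q_y$. Verifying this collinearity (or the sufficiency of the Lipschitz bound) is where the real work lies.
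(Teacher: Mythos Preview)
The paper does not prove Theorem~\ref{kvzconj}; it merely quotes it as a known result from \cite{svw} (where the conjecture of \cite{kvz} was settled). There is therefore no ``paper's own proof'' of this statement to compare your attempt against. The techniques developed in the present paper---in particular Theorem~\ref{s1} and the Bernoulli-type Lemma~\ref{Le1}---are aimed at the punctured-disk result, Theorem~\ref{mainthm}, not at Theorem~\ref{kvzconj}.

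On the merits of your proposal itself: the normalization to $f=T_a$, the explicit formulas for $|T_ax-T_ay|$ and $1-|T_ax|$ via $Q_z$, the reduction to the paired one-sided inequality $(\star)$, and the sharpness argument are all correct and cleanly organized. However, you have not actually proved the theorem: you explicitly stop at $(\star)$ and say ``Verifying this collinearity (or the sufficiency of the Lipschitz bound) is where the real work lies.'' That is precisely the content of the result, so the proposal is an outline with the essential step missing. Your suggested device $Q_y\ge Q_x-t|x-y|$ may or may not close the estimate globally; you have neither carried it through nor shown why the extremal $y$ must be collinear with $x$ and $a$. As written, this is a reasonable plan but not a proof.

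If you wish to complete it, the route taken in \cite{svw} (and mirrored in the present paper for the punctured case) is not a geometric reduction to collinearity but rather a direct analytic attack using a sharpened Bernoulli inequality of the type in Lemma~\ref{Le1} here: one rewrites the target inequality so that the exponent $1+t$ becomes $2/(1+q)$ and then verifies an algebraic condition of the form $q\le Y+(Y-1)/(X+1)$, exactly as in Theorem~\ref{s1}. That machinery dispenses with the need to locate the extremal configuration geometrically.
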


\bigskip

A similar result for a punctured disk was conjectured in \cite{svw}. The next theorem, our main result, settles this
conjecture from  {\cite{svw}} in the affirmative.

\begin{theorem} \label{mainthm}
Let $a\in\B$ and $h: \B\setminus \{0\}\rightarrow\B\setminus
\{a\}$ be a M\"obius transformation with $h(0)=a$. Then for
$x,\,y\in \B\setminus \{0\}$
\begin{eqnarray*}
j_{\B\setminus \{a\}}(h(x),h(y))\le C(a) j_{\B\setminus
\{0\}}(x,y),
\end{eqnarray*}
where the constant $ C(a)=1+(\log\frac{2+|a|}{2-|a|})/\log3$ is
best possible.
\end{theorem}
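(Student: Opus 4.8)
The plan is to normalize, dispatch sharpness by an explicit pair of test points, and then prove the inequality $j_{\B\setminus\{a\}}(h(x),h(y))\le C(a)\,j_{\B\setminus\{0\}}(x,y)$ by reducing to a one-parameter family. Since every rotation of $\B$ is an isometry of the distance ratio metric of a punctured disk, by pre- and post-composing $h$ with rotations I may assume $a\in(0,1)$ and
$$
h(z)=\frac{z+a}{1+az}.
$$
For this $h$ the standard Möbius identities
$$
|h(x)-h(y)|=\frac{(1-a^2)\,|x-y|}{|1+ax|\,|1+ay|},\quad |h(x)-a|=\frac{(1-a^2)\,|x|}{|1+ax|},\quad 1-|h(x)|^2=\frac{(1-a^2)(1-|x|^2)}{|1+ax|^2}
$$
express every ingredient of the left-hand side through $x,y,a$. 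Here $d_{\B\setminus\{0\}}(x)=\min\{|x|,1-|x|\}$ and $d_{\B\setminus\{a\}}(w)=\min\{|w-a|,1-|w|\}$, so each distance to the boundary is the smaller of a puncture term and a circle term; keeping track of which term is active is what produces the case analysis below.

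Sharpness is immediate and pins down the extremal configuration. Take $x=\tfrac12$, $y=-\tfrac12$. Then $d_{\B\setminus\{0\}}(\pm\tfrac12)=\tfrac12$ and $|x-y|=1$, so $j_{\B\setminus\{0\}}(x,y)=\log 3$. A short computation with the identities above gives $\min\{d_{\B\setminus\{a\}}(h(\tfrac12)),d_{\B\setminus\{a\}}(h(-\tfrac12))\}=\tfrac{1-a}{2+a}$ (the circle term of $h(\tfrac12)$) and $|h(\tfrac12)-h(-\tfrac12)|=\tfrac{4(1-a^2)}{4-a^2}$, whence
$$
j_{\B\setminus\{a\}}\big(h(\tfrac12),h(-\tfrac12)\big)=\log\Big(1+\frac{4(1+a)}{2-a}\Big)=\log\frac{6+3a}{2-a}=\Big(1+\frac{\log\frac{2+a}{2-a}}{\log 3}\Big)\log 3.
$$
Thus the ratio equals exactly $C(a)$ on this pair, so $C(a)$ cannot be decreased.

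For the upper bound I would first record the decomposition
$$
j_{\B\setminus\{p\}}(u,v)=\max\{\,j_{\B}(u,v),\ j_{\mathbb{C}\setminus\{p\}}(u,v)\,\},
$$
valid because the minimum defining $d_{\B\setminus\{p\}}$ splits into its circle and puncture parts. This organizes $j_{\B\setminus\{a\}}(h(x),h(y))$ into a disk part, for which Theorem \ref{kvzconj} yields the factor $1+|h(0)|=1+a$, and a punctured-plane part. The crude bound $\max\{1+a,\dots\}$ is, however, too weak: one checks $C(a)<1+a$ for $a\in(0,1)$, so the two parts cannot be estimated in isolation—at a configuration where the disk part of the image is large, the matching $j_{\B}(x,y)$ in the denominator is correspondingly large, and this coupling is exactly what the example $x=\pm\tfrac12$ exploits. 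I would therefore reduce directly: fixing $|x|$ and $|y|$ and varying the arguments, show that the ratio is largest when $x,y$ are real and lie on opposite sides of the origin along the real diameter through $0$, after which the problem becomes a one-variable optimization in $|x|=|y|=r$. Splitting according to which of the puncture/circle terms is active for each of $x,y$ and each of $h(x),h(y)$ gives finitely many cases, in each of which $j_{\B\setminus\{a\}}(h(x),h(y))/j_{\B\setminus\{0\}}(x,y)\le C(a)$ reduces to a monotonicity statement for an explicit function of $r$, with the maximum occurring at $r=\tfrac12$.

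The main obstacle is precisely the angular reduction—establishing rigorously that the supremum over all $x,y\in\B\setminus\{0\}$ is attained on the real diametral family—since the dependence of the numerator on $\arg x$ and $\arg y$ through $|1+ax|$, $|1+ay|$ and through the two image distances is not monotone in an obvious way; this is where the bulk of the estimation lies. Once the configuration is reduced to real points, the remaining case-by-case calculus inequalities are routine, and they meet equality only in the configuration identified above, confirming that $C(a)$ is best possible.
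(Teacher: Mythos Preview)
Your sharpness computation with $x=\tfrac12$, $y=-\tfrac12$ is correct and matches the paper's extremal pair $z=a/(2|a|)$, $w=-a/(2|a|)$. The decomposition $j_{\B\setminus\{p\}}=\max\{j_{\B},\,j_{\mathbb{C}\setminus\{p\}}\}$ is also valid. But the upper bound is not proved: you explicitly flag the ``angular reduction'' (that the supremum of the ratio over all $x,y$ is attained at real diametrically opposite points with $|x|=|y|$) as the main obstacle and do not carry it out. That step is the entire content of the inequality; without it you have only a plausible heuristic and the sharpness half of the theorem. Moreover, the further reduction to $|x|=|y|=r$ is a strong claim that would need its own justification even after an angular reduction.

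The paper avoids this obstacle altogether. It never restricts to real points or to $|x|=|y|$; instead it works with general $z,w$ (say $|z|\ge|w|$), splits into four cases according to which of $|h(z)-a|$, $|h(w)-a|$, $1-|h(z)|$, $1-|h(w)|$ realizes the image boundary distance, and in each case passes from the complex quantities to moduli via the elementary bounds $|1+\bar a z|\ge 1-|a||z|$, etc. The two ``puncture'' cases are handled with a sharpened Bernoulli-type inequality (their Theorem~\ref{s1}, built on Lemma~\ref{Le1}) and give the smaller constant $2/(2-|a|)$; the two ``circle'' cases are handled via Lemma~\ref{l3} and the monotone l'H\^opital rule (Lemma~\ref{Le2}) and yield exactly $C(a)$. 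So the missing idea in your proposal is not an angular reduction at all, but rather the case-by-case estimation using these two analytic lemmas, which lets one bound the ratio directly for arbitrary complex $z,w$.
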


\bigskip

Clearly the constant $C(a)<1+|a| <2$ for all $a \in \B$ and hence
the constant in Theorem \ref{mainthm} is smaller than the constant
$1+|f(0)|$ in Theorem \ref{kvzconj} and far smaller than the
constant $2$ in Theorem \ref{gplem}.

If $a=0$ in Theorem \ref{mainthm}, then $h$ is a rotation of the unit
disk and hence a Euclidean isometry. Note that $C(0)=1\,,$ i.e. the
result is sharp in this case.

The proof is based on Theorem \ref{s1} below and on Lemma \ref{Le2}, a monotone form of l'H${\rm \hat{o}}$pital's rule from \cite[Theorem 1.25]{avv}.


\section{Preliminary results}

In view of the definition of the distance ratio metric it is
natural to expect that some properties of the logarithm will be
needed. In the earlier paper \cite{svw}, the classical Bernoulli
inequality \cite[(3.6)]{v} was applied for this purpose.
Apparently now some other inequalities are needed and we use the
following result, which is precise and allows us to get rid of
logarithms in further calculations.

\bigskip

\begin{theorem}\label{s1}
Let $D$ and $D'$ be proper subdomains of $\Rn$. For an open
continuous mapping $f: D \to D'$ denote
\small
$$
X=X(z,w):=\frac{|z-w|}{\min \{d_D(z),d_D(w)\}}; \ \
Y=Y(z,w):=\frac{|z-w|}{|f(z)-f(w)|}\frac{\min
\{d_{D'}(f(z)),d_{D'}((f(w))}{\min \{d_D(z),d_D(w)\}}.
$$
\normalsize
If there exists $q, \ 0\le q\le 1$ such that
\begin{equation}\label{eq1}
q\le Y+\frac{Y-1}{X+1},
\end{equation}
then the inequality

$$
j_{D'}(f(z),f(w))\le \frac{2}{1+q}j_{D}(z,w),
$$
holds for all $z,w\in D$.

\end{theorem}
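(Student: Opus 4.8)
The plan is to eliminate the logarithms by reducing the desired inequality to a one-variable estimate and then to invoke the monotone form of l'Hôpital's rule (Lemma \ref{Le2}). The first observation is that the two quantities appearing in the two $j$-metrics are linked through $Y$. Writing
$$
X' := \frac{|f(z)-f(w)|}{\min\{d_{D'}(f(z)),d_{D'}(f(w))\}},
$$
so that $j_{D'}(f(z),f(w)) = \log(1+X')$ and $j_D(z,w) = \log(1+X)$, a direct computation from the definitions of $X$ and $Y$ gives the identity $X' = X/Y$. Thus the theorem is equivalent to the assertion that $\log(1 + X/Y) \le \frac{2}{1+q}\log(1+X)$ whenever $q \le Y + (Y-1)/(X+1)$.

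First I would use the hypothesis \eqref{eq1} to convert the constraint on $Y$ into an upper bound for $X' = X/Y$. Solving $q \le Y + (Y-1)/(X+1)$ for $Y$ yields $Y \ge (q(X+1)+1)/(X+2)$, and since all the quantities involved are positive this gives
$$
1 + X' \le 1 + \frac{X(X+2)}{q(X+1)+1} = \frac{X^2 + (2+q)X + (q+1)}{q(X+1)+1}.
$$
The numerator factors as $(X+1)(X+1+q)$ — the discriminant of the quadratic is exactly $q^2$ — so with the substitution $t = X+1 \ge 1$ the bound reads $1 + X' \le t(t+q)/(qt+1)$. Since the logarithm is increasing, it then suffices to prove the purely analytic inequality
$$
\log\frac{t(t+q)}{qt+1} \le \frac{2}{1+q}\log t, \qquad t \ge 1,\ 0 \le q \le 1.
$$

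To establish this, I would apply the monotone l'Hôpital rule to $R(t) = N(t)/D(t)$ with $N(t) = \log\frac{t(t+q)}{qt+1}$ and $D(t) = \log t$, noting $N(1)=D(1)=0$. A short computation gives
$$
\frac{N'(t)}{D'(t)} = 1 + \frac{t(1-q^2)}{(t+q)(qt+1)},
$$
and differentiating the rational factor shows that its numerator equals $q(1-t^2)$, which is nonpositive for $t \ge 1$. Hence $N'/D'$ is decreasing on $(1,\infty)$, so by Lemma \ref{Le2} the ratio $R$ is decreasing as well; its value at $t=1$ is $\lim_{t\to1^+} N'(t)/D'(t) = 2/(1+q)$. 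Therefore $R(t) \le 2/(1+q)$ for all $t \ge 1$, which is exactly the inequality sought, and unwinding the substitution completes the argument.

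The steps of converting the metric inequality into the bound on $X'$ and factoring the quadratic are routine; the genuine content, and the step I expect to require the most care, is verifying that $N'/D'$ is monotone — that is, controlling the sign of the derivative of $t/((t+q)(qt+1))$ uniformly in $q$ — since this is what licenses the application of the monotone l'Hôpital rule and pins down the constant $2/(1+q)$ through the boundary value at $t=1$.
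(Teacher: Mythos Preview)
Your argument is correct and, up to a change of variable, reduces to exactly the same inequality as the paper. With $t=X+1=e^{j_D(z,w)}$, your bound $1+X'\le t(t+q)/(qt+1)$ is precisely the paper's observation that condition \eqref{eq1} is equivalent to
\[
e^{j_{D'}(f(z),f(w))}\le e^{j_D(z,w)}\cdot\frac{q+e^{j_D(z,w)}}{1+qe^{j_D(z,w)}},
\]
and the inequality $\log\frac{t(t+q)}{qt+1}\le\frac{2}{1+q}\log t$ you then establish is, after writing $t=e^a$, identical to the paper's Lemma~\ref{Le1}. The only genuine difference is in how this last inequality is verified: the paper proves Lemma~\ref{Le1} by a one-line differentiation of $f(a,q)=\log\frac{q+e^a}{1+qe^a}-\frac{1-q}{1+q}a$, whereas you invoke the monotone l'H\^opital rule (Lemma~\ref{Le2}) on the ratio $N(t)/D(t)$. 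Both are elementary and of comparable length; the paper's direct differentiation is marginally shorter, while your route has the mild advantage of explaining \emph{why} the constant is exactly $2/(1+q)$, namely as the boundary value $\lim_{t\to 1^+}N'(t)/D'(t)$.
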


\begin{proof}

The proof is based on the following assertion.

\begin{lemma}\label{Le1}

For $ a\ge 0, q\in [0,1]$, we have

$$
\log\Bigl(\frac{q+e^a}{1+qe^a}\Bigr)\le \frac{1-q}{1+q}a.
$$
\end{lemma}

\begin{proof}
Denote
$$
f(a,q):= \log\Bigl(\frac{q+e^a}{1+qe^a}\Bigr)- \frac{1-q}{1+q}a.
$$
By differentiation, we have

$$
f'_a(a,q)=-\frac{q(1-q)}{1+q}\frac{(e^a-1)^2}{(1+qe^a)(q+e^a)},
$$
we conclude that
$$
f(a,q)\le f(0,q)=0.
$$
\end{proof}

Now, since
$$
X=\frac{|z-w|}{\min \{d_D(z),d_D(w) \}}=\exp(j_D(z,w))-1,
$$

and

$$
Y=\frac{|z-w|}{|f(z)-f(w)|}\frac{\min \{d_{D'}(f(z)),d_{D'}((f(w))
\}}{\min \{d_D(z),d_D(w)
\}}=\frac{\exp(j_D(z,w))-1}{\exp(j_{D'}(f(z),f(w)))-1},
$$

the condition \eqref{eq1} is equivalent to
$$
\exp(j_{D'}(f(z),f(w)))\le
\exp(j_D(z,w))\Bigl(\frac{q+e^{j_D(z,w)}}{1+qe^{j_D(z,w)}}\Bigr).
$$
Therefore, by Lemma \ref{Le1}, we get

$$
j_{D'}(f(z),f(w))\le
j_D(z,w)+\log\Bigl(\frac{q+e^{j_D(z,w)}}{1+qe^{j_D(z,w)}}\Bigr)
$$
$$
\le j_D(z,w)+\frac{1-q}{1+q}j_D(z,w)=\frac{2}{1+q}j_D(z,w).
$$
\end{proof}

In the sequel we shall need the so-called {\em monotone form of
l'H${\rm \hat{o}}$pital's rule}.

\begin{lemma} \label{Le2}{\rm \cite[Theorem 1.25]{avv}}.
For $-\infty<a<b<\infty$, let $f,\,g: [a,b]\rightarrow \mathbb{R}$
be continuous on $[a,b]$, and be differentiable on $(a,b)$, and
let $g'(x)\neq 0$ on $(a,b)$. If $f'(x)/g'(x)$ is
increasing(deceasing) on $(a,b)$, then so are
\begin{eqnarray*}
\frac{f(x)-f(a)}{g(x)-g(a)}\,\,\,\,\,\,\,and\,\,\,\,\,\,\,\,\frac{f(x)-f(b)}{g(x)-g(b)}.
\end{eqnarray*}
If $f'(x)/g'(x)$ is strictly monotone, then the monotonicity in
the conclusion is also strict.
\end{lemma}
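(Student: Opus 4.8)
The plan is to reduce everything to a single derivative-sign computation, using Cauchy's mean value theorem to eliminate the ``integrated'' quantities $f(x)-f(a)$ and $g(x)-g(a)$ in favour of the monotone ratio $r:=f'/g'$. First I would record that the hypotheses pin down the geometry: since $g$ is differentiable on $(a,b)$ with $g'\neq0$, Darboux's theorem (derivatives have the intermediate value property) forces $g'$ to keep a constant sign there. Replacing $g$ by $-g$ if necessary---which negates $r$ and negates each of the two quotients simultaneously, hence interchanges the increasing and decreasing statements consistently---I may assume $g'>0$ on $(a,b)$. Then $g$ is strictly increasing, so $g(x)-g(a)\neq0$ for $x\in(a,b]$ and $g(x)-g(b)\neq0$ for $x\in[a,b)$; in particular both quotients are well defined and differentiable on $(a,b)$ and continuous up to the relevant endpoint. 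Replacing $f$ by $-f$ interchanges the two monotonicity statements in the same way, so it suffices to treat the case where $r$ is increasing and show that each quotient is then increasing.

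Set $H(x):=\dfrac{f(x)-f(a)}{g(x)-g(a)}$. Differentiating and discarding the positive denominator $(g(x)-g(a))^2$, the sign of $H'(x)$ equals the sign of
$$N(x):=f'(x)\bigl(g(x)-g(a)\bigr)-\bigl(f(x)-f(a)\bigr)g'(x).$$
The decisive step is to factor out $g'(x)>0$ and invoke Cauchy's mean value theorem on $[a,x]$, which produces $\xi\in(a,x)$ with $f(x)-f(a)=r(\xi)\bigl(g(x)-g(a)\bigr)$. Substituting gives
$$N(x)=g'(x)\bigl(r(x)-r(\xi)\bigr)\bigl(g(x)-g(a)\bigr),$$
and since $\xi<x$ with $r$ increasing, $g'>0$, and $g(x)-g(a)>0$, every factor is nonnegative, so $H'\ge0$ and $H$ is increasing on $(a,b]$. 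If $r$ is strictly increasing then $r(x)-r(\xi)>0$, whence $H'>0$ and the monotonicity is strict.

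For the second quotient $\widetilde H(x):=\dfrac{f(x)-f(b)}{g(x)-g(b)}$ I would run the mirror-image argument: the sign of $\widetilde H'(x)$ is that of $\widetilde N(x):=f'(x)(g(x)-g(b))-(f(x)-f(b))g'(x)$, and Cauchy's mean value theorem on $[x,b]$ yields $\eta\in(x,b)$ with $f(x)-f(b)=r(\eta)(g(x)-g(b))$, so that $\widetilde N(x)=g'(x)(r(x)-r(\eta))(g(x)-g(b))$. Now $\eta>x$ gives $r(x)-r(\eta)\le0$ while $g(x)-g(b)<0$, so again the product is nonnegative and $\widetilde H$ is increasing, strictly so when $r$ is.

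I expect the only genuinely delicate points to be two bookkeeping issues rather than any hard estimate. The first is the Darboux argument, which is what guarantees that the denominators never vanish even though $g$ is assumed merely differentiable rather than $C^1$, and which is easy to overlook. The second is keeping the sign conventions coherent across the $g\mapsto-g$ and $f\mapsto-f$ reductions, so that ``increasing'' and ``decreasing'' are exchanged in lockstep in both the hypothesis on $r$ and the conclusions; once this is organised, the substance of the proof is the clean identity expressing $N(x)$ as a product in which the sign of $r(x)-r(\xi)$ is dictated purely by the monotonicity of $r$.
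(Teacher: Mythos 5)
The paper does not prove this lemma at all: it is imported verbatim, with attribution, from \cite[Theorem 1.25]{avv}, so there is no internal proof to compare against. Your argument is correct and complete, and it is essentially the standard proof of that cited result --- Darboux's theorem to fix the sign of $g'$, Cauchy's mean value theorem to write $f(x)-f(a)=r(\xi)\bigl(g(x)-g(a)\bigr)$ and thereby factor the numerator of $H'$ as $g'(x)\bigl(r(x)-r(\xi)\bigr)\bigl(g(x)-g(a)\bigr)$, with the endpoint-$b$ quotient handled by the mirror-image estimate on $[x,b]$ and the strict case read off from $r(x)>r(\xi)$.
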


\bigskip
Lemma \ref{Le2} has found numerous applications recently. See the bibliography of \cite{avz} for a long list of applications to inequalities.
\bigskip

\begin{lemma}\label{l3}
For positive numbers $A,B,D$ and $0<C<1, \theta\ge 0$, we have

1. The inequality

$$
1+\frac{B}{D}\theta(1+\frac{D}{1+A})(1+\frac{B}{1-C}\theta)\le
(1+\frac{B}{D}\theta)(1+\frac{B}{1-C}\theta),
$$

holds if and only if $B\theta\le A+C$;

2. The function

$$
\frac{\log(1+\frac{B}{1-C}\theta)}{\log(1+\frac{B}{D}\theta)}
$$

is monotone increasing (decreasing) in $\theta$ if $C+D<1 \
(C+D>1)$.

\end{lemma}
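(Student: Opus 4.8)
The plan is to treat the two parts separately; both reduce to elementary manipulations once the right quantities are isolated, so I do not expect either to pose a serious difficulty.

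For part 1, I would first strip off the common constant term and introduce the shorthand $p=\frac{B}{D}\theta$, $r=\frac{D}{1+A}$, $s=\frac{B}{1-C}\theta$, so that the claimed inequality reads $1+p(1+r)(1+s)\le(1+p)(1+s)$. Expanding $(1+r)(1+s)=1+r+s+rs$ on the left and $(1+p)(1+s)=1+p+s+ps$ on the right and then cancelling the common terms $1+p+ps$ collapses the inequality to $pr(1+s)\le s$. Now I substitute back: since $pr=\frac{B\theta}{1+A}$ and $1+s=\frac{1-C+B\theta}{1-C}$ while $s=\frac{B\theta}{1-C}$, for $\theta>0$ I may divide through by $\frac{B\theta}{1-C}>0$ to obtain the equivalent statement $\frac{1-C+B\theta}{1+A}\le1$, that is, $B\theta\le A+C$. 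For $\theta=0$ both sides agree and the condition $0\le A+C$ holds trivially, so the ``if and only if'' is established in every case.

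For part 2, I would invoke the monotone form of l'H\^opital's rule (Lemma \ref{Le2}) directly. Set $f(\theta)=\log(1+\frac{B}{1-C}\theta)$ and $g(\theta)=\log(1+\frac{B}{D}\theta)$ on $[0,b]$ for an arbitrary $b>0$. Both are continuous and differentiable, $f(0)=g(0)=0$, and $g'(\theta)=\frac{B}{D+B\theta}>0$ never vanishes (using $B,D>0$), so the hypotheses of Lemma \ref{Le2} are met and the given quotient equals $\frac{f(\theta)-f(0)}{g(\theta)-g(0)}$. A short computation gives
$$
\frac{f'(\theta)}{g'(\theta)}=\frac{B}{1-C+B\theta}\cdot\frac{D+B\theta}{B}=\frac{D+B\theta}{1-C+B\theta},
$$
whose derivative in $\theta$ equals $\frac{B(1-C-D)}{(1-C+B\theta)^2}$ and hence carries the sign of $1-C-D$. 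Thus $f'/g'$ is strictly increasing when $C+D<1$ and strictly decreasing when $C+D>1$, and Lemma \ref{Le2} transfers this monotonicity to the quotient itself.

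Neither step presents a genuine obstacle: part 1 is a finite algebraic identity and part 2 is a textbook application of Lemma \ref{Le2}. The only places demanding a little care are the bookkeeping in part 1 --- correctly tracking the cancellation, justifying the division by $\frac{B\theta}{1-C}$, and handling $\theta=0$ separately --- and, in part 2, pinning down the sign of $1-C-D$ in the derivative of the quotient, since this sign is precisely what produces the dichotomy between the increasing and decreasing cases.
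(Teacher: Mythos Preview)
Your proposal is correct and follows essentially the same route as the paper. For part 1 the paper merely says ``direct calculation'' while you carry it out explicitly, and for part 2 both you and the paper apply Lemma~\ref{Le2} after computing $f'/g'=\dfrac{D+B\theta}{1-C+B\theta}$; the only cosmetic difference is that the paper rewrites this as $1+\dfrac{C+D-1}{1-C+B\theta}$ to read off monotonicity, whereas you differentiate it.
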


\begin{proof}
Proof of the first part follows by direct calculation.

For the second part, set

$$
f_1(\theta)=\log(1+\frac{B}{1-C}\theta) , f_1(0)=0; \ \
f_2(\theta)=\log(1+\frac{B}{D}\theta), f_2(0)=0.
$$
Since

$$
\frac{f'_1(\theta)}{f'_2(\theta)}=\frac{D+B\theta}{1-C+B\theta}=1+\frac{C+D-1}{1-C+B\theta},
$$
the proof follows according to Lemma \ref{Le2}.

\end{proof}

\bigskip

\section{Proof of Theorem \ref{mainthm}}

\begin{nonsec}{\bf Proof of Theorem \ref{mainthm}.} \end{nonsec}
For the proof, define $h(z)=\frac{z+a}{1+\bar a z}$ and suppose in
the sequel that $|z|\ge |w|$. Then

$$
j_G(z,w)=\log\Bigl(1+\frac{|z-w|}{\min\{|z|,|w|,1-|z|,1-|w|\}}\Bigr)
=\log\Bigl(1+\frac{|z-w|}{\min\{|w|,1-|z|\}}\Bigr),
$$
and
$$
j_{G'}(h(z),h(w))=\log\Bigr(1+\frac{|h(z)-h(w)|}{T}\Bigr),
$$
where
$$
T=T(a,z,w):=\min\{ |h(z)-a|,|h(w)-a|,1-|h(z)|,1-|h(w)|\}.
$$

In concert with the definition of the number $T,$ the proof is
divided into four cases. We shall consider each case separately
applying Bernoulli inequality in the first case, its stronger form
from Theorem \ref{s1} in the second one and a direct approach in
the last two cases.

\bigskip

1. \ $T=|h(z)-a|$.

\bigskip

Since $|h(z)-a|=\frac{(1-|a|^2)|z|}{|1+\bar a z|}$ and
$|h(z)-h(w)|=\frac{(1-|a|^2)|z-w|}{|1+\bar a z||1+\bar a w|}$, we
have
$$
j_{G'}(h(z),h(w))=\log\Bigr(1+\frac{|z-w|}{|z||1+\bar a w|}\Bigr).
$$
Suppose firstly that $|w|\le 1-|z|$. Since also $|w|\le 1-|z|\le
1-|w|$, we conclude that $0\le |w|\le 1/2$. Hence, by the Bernoulli inequality (see e.g. \cite[(3.6)]{v}), we get$$
j_{G'}(h(z),h(w))\le\log\Bigl(1+\frac{|z-w|}{|z|(1-|a|
|w|)}\Bigr)\le\log\Bigl(1+\frac{|z-w|}{|w|(1-\frac
{|a|}{2})}\Bigr)
$$
$$
\le\frac{1}{1-\frac
{|a|}{2}}\log\Bigl(1+\frac{|z-w|}{|w|}\Bigr)=\frac{1}{1-\frac
{|a|}{2}}j_G(z,w).
$$

\bigskip

Suppose now $1-|z|\le |w|(\le |z|)\,.$ Then $1/2\le |z|<1$.

\bigskip

Since in this case $(|z|-\frac{1}{2})(2-|a|(1+|z|))\ge 0$, we
easily obtain that
$$
\frac{1}{|z|(1-|a||z|)}\le\frac{1}{(1-\frac {|a|}{2})(1-|z|)}.
$$
Hence,
$$
j_{G'}(h(z),h(w))\le\log\Bigl(1+\frac{|z-w|}{|z|(1-|a||w|)}\Bigr)
\le\log\Bigl(1+\frac{|z-w|}{|z|(1-|a||z|)}\Bigr)
$$
$$
\le\log\Bigl(1+\frac{|z-w|}{(1-\frac {|a|}{2})(1-|z|)}\Bigr)\le
\frac{1}{1-\frac
{|a|}{2}}\log\Bigl(1+\frac{|z-w|}{1-|z|}\Bigr)=\frac{1}{1-\frac
{|a|}{2}}j_G(z,w).
$$

\bigskip

2. \ $T=|h(w)-a|$.

\bigskip

This case can be treated by means of Theorem \ref{s1} with the
same resulting constant $C_1(a)=\frac{2}{2-|a|}$.

Indeed, in terms of Theorem \ref{s1}, we consider firstly the case
$|w|\le 1-|z|$.

We get

$$
X=\frac{|z-w|}{|w|}\ge \frac{|z|-|w|}{|w|}= \frac{|z|}{|w|}-1=X^*,
$$
and

$$
Y=|1+\bar a z|\ge 1-|a||z|=Y^*.
$$
Therefore,

$$
Y+\frac{Y-1}{X+1}\ge
Y^*-\frac{1-Y^*}{1+X^*}=1-|a||z|-|a|||z|\frac{|w|}{|z|}
$$
$$
=1-|a|(|w|+|z|)\ge 1-|a|=q.
$$

\bigskip

In the second case, i.e. when $1-|z|\le |w|$, we want to show that
$$
Y+\frac{Y-1}{X+1}\ge 1-|a|\,, \,{\rm with}\,X=\frac{|z-w|}{1-|z|}\,, Y=|1+\bar{a}z|\frac{|w|}{1-|z|}\,.
$$
This is equivalent to
$$
(Y-(1-|a|))(1+X)+Y\ge 1.
$$
Since in this case
$$
X=\frac{|z-w|}{1-|z|}\ge\frac{|z|-|w|}{1-|z|}:=X^*
$$
and

$$
Y=|1+\bar{a}z|\frac{|w|}{1-|z|}\ge
(1-|a||z|)\frac{|w|}{1-|z|}=1-|a||z|+(|w|+|z|-1)\frac{1-|a||z|}{1-|z|}:=Y^*,
$$
we get

$$
(Y-(1-|a|))(1+X)+Y-1\ge(Y^*-(1-|a|))(1+X^*)+Y^*-1
$$
$$
=\Bigl[|a|(1-|z|)+(|w|+|z|-1)\frac{1-|a||z|}{1-|z|}\Bigr]\frac{1-|w|}{1-|z|}-|a||z|+(|w|+|z|-1)\frac{1-|a||z|}{1-|z|}
$$
$$
\ge
|a|(1-|w|-|z|)+(|w|+|z|-1)\frac{1-|a||z|}{1-|z|}=(|w|+|z|-1)\frac{1-|a|}{1-|z|}\ge
0.
$$

\bigskip

Therefore by Theorem \ref{s1}, in both cases we get

$$
j_{G'}(h(z),h(w))\le \frac{2}{1+q}j_{G}(z,w)=
\frac{2}{2-|a|}j_{G}(z,w)=C_1(a)j_{G}(z,w).
$$

\bigskip

3. \ $T=1-|h(z)|$.

\bigskip

In this case, applying well-known assertions

$$
|1+\bar{a}z|^2-|a+z|^2=(1-|a|^2)(1-|z|^2); \ \
|h(z)|\le\frac{|a|+|z|}{1+|a||z|},
$$
and

$$
|1+\bar{a}w|\ge 1-|a||w|(\ge 1-|a||z|),
$$
we get

$$
j_{G'}(h(z),h(w))=
\log\Bigl(1+\frac{|z-w|(1-|a|^2)}{|1+\bar{a}w|(|1+\bar{a}z|-|z+a|)}\Bigr)
=\log\Bigl(1+\frac{|z-w|(|1+\bar{a}z|+|z+a|)}{|1+\bar{a}w|(1-|z|^2)}\Bigr)
$$
$$
=\log\Bigl(1+\frac{|z-w|}{1-|z|^2}|1+\frac{\bar{a}(z-w)}{1+\bar{a}w}|
(1+\frac{|z+a|}{|1+\bar{a}z|})\Bigr)\le\log\Bigl(1+\frac{|z-w|}{1-|z|}(1+\frac{|a||z-w|}{1-|a||w|})
(1+\frac{|a|(1-|z|)}{1+|a||z|})\Bigr).
$$

Applying here Lemma \ref{l3}, part 1., with
$$
A=|a||z|, \ B=|a|, \ C=|a||w|, D=|a|(1-|z|), \ \theta=|z-w|,
$$
we obtain

\begin{equation}\label{eq2}
j_{G'}(h(z),h(w))\le
\log\Bigl[\Bigl(1+\frac{|z-w|}{1-|z|}\Bigr)\Bigl(1+\frac{|a||z-w|}{1-|a||w|}\Bigr)\Bigr].
\end{equation}

Suppose that $1-|z|\le |w| (\le |z|)$. By Lemma \ref{l3}, part 2.,
\textcolor{black}{ with
$$
B=|a|, \ C=|a||z|, \ D=|a|(1-|z|), \ \theta=|z-w|,
$$}
we get
$$
J(z,w;a):=\frac{j_{G'}(h(z),h(w))}{j_{G}(z,w)}\le
1+\frac{\log(1+\frac{|a||z-w|}{1-|a||z|})}{\log\Bigl(1+\frac{|z-w|}{1-|z|}\Bigr)}
$$
$$
\le
1+\frac{\log(1+\frac{2|a||z|}{1-|a||z|})}{\log\Bigl(1+\frac{2|z|}{1-|z|}\Bigr)},
$$
because in this case we have $C+D=|a|<1$ and $|z-w|\le 2|z|$.

\bigskip

Since the last function is monotone decreasing in $|z|$ and
$|z|\ge 1/2$, we obtain

$$
J(z,w;a)\le
1+\frac{\log(\frac{1+\frac{1}{2}|a|}{1-\frac{1}{2}|a|})}{\log\Bigl(\frac{1+\frac{1}{2}}{1-\frac{1}{2}}\Bigr)}
=1+(\log\frac{2+|a|}{2-|a|})/\log 3:=C_2(a).
$$

\bigskip

Let now $|w|\le 1-|z|(\le 1-|w|)$. The estimation \eqref{eq2} and
Lemma \ref{l3}, part 2.,
\textcolor{black} { with
$$
B=|a|, \ C=D=|a||w|,  \ \theta=|z-w|,
$$}
 yield

$$
J(z,w;a)\le
\frac{\log\Bigl[\Bigl(1+\frac{|z-w|}{1-|z|}\Bigr)\Bigl(1+\frac{|a||z-w|}{1-|a||w|}\Bigr)\Bigr]}{\log\Bigl(1+\frac{|z-w|}{|w|}\Bigr)}
\le
\frac{\log\Bigl[\Bigl(1+\frac{|z-w|}{|w|}\Bigr)\Bigl(1+\frac{|a||z-w|}{1-|a||w|}\Bigr)\Bigr]}{\log\Bigl(1+\frac{|z-w|}{|w|}\Bigr)}
$$
$$
=1+\frac{\log\Bigl(1+\frac{|a||z-w|}{1-|a||w|}\Bigr)}{\log\Bigl(1+\frac{|z-w|}{|w|}\Bigr)}\le
1+\frac{\log\Bigl(1+\frac{|a|}{1-|a||w|}\Bigr)}{\log\Bigl(1+\frac{1}{|w|}\Bigr)},
$$
since $C+D=2|a||w|\le|a|<1$ and $0\le |z-w|\le |z|+|w|\le 1$.

\bigskip

Denote the last function as $g(|w|)$ and let $|w|=r, \ 0<r\le
1/2$. Since

$$
g'(r)=\frac{|a|^2}{(1-r|a|)(1+(1-r)|a|)\log(1+1/r)}+\frac{\log\Bigl(1+\frac{|a|}{1-|a|r}\Bigr)}{r(1+r)\log^2(1+1/r)}>0,
$$
we finally obtain

 $$
J(z,w;a)\le
1+\frac{\log\Bigl(1+\frac{|a|}{1-|a|/2}\Bigr)}{\log(1+2)} =C_2(a).
 $$

\bigskip

4. \ $T=1-|h(w)|$.

\bigskip

This case can be considered analogously with the previous one.

$$
j_{G'}(h(z),h(w))=
\log\Bigl(1+\frac{|z-w|(1-|a|^2)}{|1+\bar{a}z|(|1+\bar{a}w|-|w+a|)}\Bigr)
=\log\Bigl(1+\frac{|z-w|(|1+\bar{a}w|+|w+a|)}{|1+\bar{a}z|(1-|w|^2)}\Bigr)
$$
$$
=\log\Bigl(1+\frac{|z-w|}{1-|w|^2}|1+\frac{\bar{a}\textcolor{red}{ (w-z)}}{1+\bar{a}z}|
(1+\frac{|w+a|}{|1+\bar{a}w|})\Bigr)\le\log\Bigl(1+\frac{|z-w|}{1-|w|}(1+\frac{|a||z-w|}{1-|a||z|})
(1+\frac{|a|(1-|w|)}{1+|a||w|})\Bigr).
$$

Applying Lemma \ref{l3}, part 1., with
$$
A=|a||w|, \ B=|a|, \ C=|a||z|, D=|a|(1-|w|), \ \theta=|z-w|,
$$
we obtain

\begin{equation}\label{eq3}
j_{G'}(h(z),h(w))\le
\log\Bigl[\Bigl(1+\frac{|z-w|}{1-|w|}\Bigr)\Bigl(1+\frac{|a||z-w|}{1-|a||z|}\Bigr)\Bigr].
\end{equation}

Suppose that $1-|z|\le |w| (\le |z|)$. We get
$$
J(z,w;a):=\frac{j_{G'}(h(z),h(w))}{j_{G}(z,w)}\le
\frac{\log\Bigl[\Bigl(1+\frac{|z-w|}{1-|z|}\Bigr)\Bigl(1+\frac{|a||z-w|}{1-|a||z|}\Bigr)\Bigr]}{\log\Bigl(1+\frac{|z-w|}{1-|z|}\Bigr)}
$$
$$
=1+\frac{\log(1+\frac{|a||z-w|}{1-|a||z|})}{\log\Bigl(1+\frac{|z-w|}{1-|z|}\Bigr)}
$$
and this inequality is already considered above.

\bigskip

In the case $|w|\le1-|z|\le1-|w|$, we have

$$
J(z,w;a)\le\frac{\log\Bigl[\Bigl(1+\frac{|z-w|}{1-|w|}\Bigr)\Bigl(1+\frac{|a||z-w|}{1-|a||z|}\Bigr)\Bigr]}{\log\Bigl(1+\frac{|z-w|}{|w|}\Bigr)}
\le\frac{\log\Bigl[\Bigl(1+\frac{|z-w|}{|w|}\Bigr)\Bigl(1+\frac{|a||z-w|}{1-|a|(1-|w|)}\Bigr)\Bigr]}{\log\Bigl(1+\frac{|z-w|}{|w|}\Bigr)}
$$
$$
=1+\frac{\log\Bigl(1+\frac{|a||z-w|}{1-|a|(1-|w|)}\Bigr)}{\log\Bigl(1+\frac{|z-w|}{|w|}\Bigr)}
\le
1+\frac{\log\Bigl(1+\frac{|a|}{1-|a|(1-|w|)}\Bigr)}{\log\Bigl(1+\frac{1}{|w|}\Bigr)},
$$
where the last inequality follows from Lemma \ref{l3}, part 2.,
\textcolor{black}{ with
$$
B=|a|, \ C=|a|(1-|w|), \ D=|a||w|, \ \theta=|z-w|,
$$}
since $C+D=|a|<1$ and $|z-w|\le|z|+|w|\le1$.

Denote now $|w|=r$ and let $k(r)=k_1(r)/k_2(r)$ with
$$
k_1(r)=\log\Bigl(1+\frac{|a|}{1-|a|(1-r)}\Bigr); \
k_2(r)=\log\Bigl(1+\frac{1}{r}\Bigr).
$$

We shall show now that the function $k(r)$ is monotone increasing
on the positive part of real axis.

 Indeed, since $k_1(\infty)=k_2(\infty)=0$ and

$$
k_1'(r)/k_2'(r)=\frac{|a|^2r(1+r)}{(1+|a|r)(1-|a|+|a|r)}=\frac{|a|(1+r)}{1+|a|r)}\frac{|a|r}{1-|a|+|a|r}
$$
$$
=(1-\frac{1-|a|}{1+|a|r})(1-\frac{1-|a|}{1-|a|+|a|r}),
$$
with both functions in parenthesis evidently increasing on
$\mathbb R^+$, the conclusion follows from Lemma \ref{Le2}.

Since in this case $0<r\le 1/2$, we also obtain that

$$
J(z,w;a)\le1+\frac{\log\Bigl(1+\frac{|a|}{1-|a|/2}\Bigr)}{\log\Bigl(1+2\Bigr)}=C_2(a).
$$

\bigskip

The constant $C_2(a)$ is sharp since
$J(\frac{a}{2|a|},\frac{-a}{2|a|};a)=C_2(a)$.

\vskip 1cm

Because $C_2(a)=1+(\log\frac{2+a}{2-a})/\log 3>\frac{1}{1-\frac
{|a|}{2}}=C_1(a)$, we conclude that the best possible upper bound
$C$ is $C=C_2(a) \hfill \square$.


\bigskip

Finally, in order to widen the topic started with Conjecture 1,
we consider the following:

Let $h:\mathbb D\to\mathbb D$ be M\"obius map with $h(0)=a$. A
challenging problem is to determine best possible j-Lip constants
$C(m,a)$ such that
$$
j(h^m(z), h^m(w))\le C(m,a) j(z,w),
$$
for all $z,w\in\mathbb D$ and $ m\in\mathbb N$.

\bigskip

It is not difficult to show that $C(m,a)\le 1+|a|=C(1,a), \ m\in
\mathbb N$. Therefore, the following question naturally arise.

\bigskip

Q1. {\em Is the sequence $C(m,a)$  monotone decreasing in $m$?}

\bigskip

A partial answer is given in the next

\begin{theorem}\label{s2}
The sequence $C(2^n,a), n\in\mathbb N$ is monotone decreasing in
$n$.
\end{theorem}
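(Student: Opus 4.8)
The plan is to exploit the semigroup structure of the iterates together with the conjugation of $h$ to a Euclidean dilation, and then to reduce the whole statement to a one-variable monotonicity handled by the monotone form of l'H\^opital's rule (Lemma \ref{Le2}), exactly in the spirit of Lemma \ref{l3}.

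First I would normalize. Since rotations of $\mathbb{D}$ about the origin are Euclidean isometries preserving $\partial\mathbb{D}$, they are $j_{\mathbb{D}}$-isometries, and composing $h$ with such rotations on either side changes neither $|h(0)|$ nor the sharp Lipschitz constant; hence I may assume $a=|a|\in[0,1)$ and $h(z)=\frac{z+a}{1+az}$. The involution $\phi(z)=\frac{1-z}{1+z}$ conjugates $h$ to the dilation $w\mapsto\lambda w$ with $\lambda=\frac{1-a}{1+a}\in(0,1)$, so that $h^{m}=\phi\circ(\lambda^{m}\cdot)\circ\phi$; in particular every iterate $h^{m}$ is again a map of the form $z\mapsto\frac{z+b_m}{1+b_m z}$ whose center-image modulus $b_m=|h^m(0)|$ is determined by the multiplier $\lambda^{m}$. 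By the same rotation reduction, $C(m,a)$ depends on $h^m$ only through $b_m$, so I may write $C(m,a)=\Phi(b_m)$ for one fixed function $\Phi$ furnished by the analysis behind Theorems \ref{kvzconj} and \ref{mainthm}.

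The role of the subsequence $m=2^{n}$ now becomes transparent: from $h^{2^{n+1}}=(h^{2^{n}})\circ(h^{2^{n}})$ and the M\"obius composition law I obtain the doubling recursion $b_{2^{n+1}}=\frac{2\,b_{2^{n}}}{1+b_{2^{n}}^{2}}$ (equivalently $\lambda^{2^{n+1}}=(\lambda^{2^{n}})^{2}$), which expresses $b_{2^{n+1}}$ explicitly through $b_{2^{n}}$, something no analogous identity provides for a generic step $m\to m+1$. Writing $\alpha=b_{2^{n}}$ and $\beta=b_{2^{n+1}}=\frac{2\alpha}{1+\alpha^{2}}$, the desired inequality $C(2^{n+1},a)\le C(2^{n},a)$ is exactly $\Phi(\beta)\le\Phi(\alpha)$, so the proof collapses to a single monotonicity statement about $\Phi$ evaluated at the two points produced by one doubling step.

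To finish I would establish that $\Phi$ is monotone in the required direction on $[0,1)$ and apply it at $\alpha$ and $\beta$. As in Lemma \ref{l3}, part 2, the sharp constant is the supremum of a ratio $\frac{j_{\mathbb{D}}(h^{m}z,h^{m}w)}{j_{\mathbb{D}}(z,w)}$ along an extremal one-parameter family; on that family the ratio is a quotient of logarithms of the form $\frac{\log(1+P\theta)}{\log(1+Q\theta)}$, whose monotonicity in the relevant parameter is decided by computing $f_1'/f_2'$ and invoking Lemma \ref{Le2}, precisely the mechanism used throughout Section 2. Combining the monotonicity of $\Phi$ with the doubling step then yields $C(2^{n+1},a)\le C(2^{n},a)$. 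The main obstacle is the first task of this last paragraph: pinning down $\Phi$ in closed form, i.e.\ identifying the extremal configuration for the \emph{iterate} $h^{m}$ and reducing the resulting expression to a clean log-ratio to which Lemma \ref{Le2} applies; once $\Phi$ is in that form the doubling recursion makes the comparison along $m=2^{n}$ automatic, while the absence of a comparable recursion for general $m$ is exactly why Q1 remains open and only the subsequence $2^{n}$ is settled here.
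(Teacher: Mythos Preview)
Your proposal rests on a misreading of the notation: in this section $h^{m}(z)$ denotes the $m$-th \emph{power} $(h(z))^{m}$, not the $m$-th iterate $h\circ\cdots\circ h$. This is visible in the paper's proof, which uses the factorisation $h^{2^{n+1}}(z)-h^{2^{n+1}}(w)=(h^{2^{n}}(z)+h^{2^{n}}(w))(h^{2^{n}}(z)-h^{2^{n}}(w))$ and $1-|h(z)|^{2^{n+1}}=(1+|h(z)|^{2^{n}})(1-|h(z)|^{2^{n}})$, i.e.\ the identities $u^{2}-v^{2}=(u+v)(u-v)$ and $1-t^{2}=(1+t)(1-t)$. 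Under the iterate interpretation these identities are meaningless, and in fact the statement itself would be false: if $h^{m}$ meant the $m$-fold composition, then $h^{m}$ would again be a M\"obius automorphism of $\mathbb{D}$ with $|h^{m}(0)|=b_{m}=\frac{1-\lambda^{m}}{1+\lambda^{m}}$ (in your own notation), and by Theorem~\ref{kvzconj} its sharp $j$-Lipschitz constant would be $1+b_{m}$, which is \emph{strictly increasing} in $m$. This also contradicts the remark $C(m,a)\le 1+|a|=C(1,a)$ that precedes Q1.

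Consequently the whole conjugation/doubling/$\Phi$-monotonicity scheme is aimed at the wrong object: for $m\ge 2$ the map $z\mapsto (h(z))^{m}$ is not M\"obius, so there is no function $\Phi$ of a single ``centre modulus'' to study, and Lemma~\ref{Le2} is not needed. The paper's argument is a two-line elementary computation: writing $A=h^{2^{n}}(z)$, $B=h^{2^{n}}(w)$ one gets
\[
\frac{|A^{2}-B^{2}|}{1-\max(|A|,|B|)^{2}}
=\frac{|A+B|}{1+\max(|A|,|B|)}\cdot\frac{|A-B|}{1-\max(|A|,|B|)}
\le \frac{|A-B|}{1-\max(|A|,|B|)},
\]
since $|A+B|\le 2\max(|A|,|B|)$ and $\frac{2t}{1+t}\le 1$ for $t\in[0,1)$; hence $j_{\mathbb{D}}(h^{2^{n+1}}z,h^{2^{n+1}}w)\le j_{\mathbb{D}}(h^{2^{n}}z,h^{2^{n}}w)$ for all $z,w$, and taking the supremum of the ratio with $j_{\mathbb{D}}(z,w)$ gives $C(2^{n+1},a)\le C(2^{n},a)$.
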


\bigskip

\begin{proof}
Indeed, since

$$
\frac{|h^{2^{n+1}}(z)-h^{2^{n+1}}(w)|}{1-\max\{|h(z)|^{2^{n+1}},|h(w)|^{2^{n+1}}\}}=\frac{|h^{2^n}(z)+h^{2^n}(w)|}{1+\max\{|h(z)|^{2^n},|h(w)|^{2^n}\}}
\frac{|h^{2^n}(z)-h^{2^n}(w)|}{1-\max\{|h(z)|^{2^n},|h(w)|^{2^n}\}}
$$
$$
\le
\frac{2\max\{|h|^{2^n}(z),|h|^{2^n}(w)\}}{1+\max\{|h(z)|^{2^n},|h(w)|^{2^n}\}}
\frac{|h^{2^n}(z)-h^{2^n}(w)|}{1-\max\{|h(z)|^{2^n},|h(w)|^{2^n}\}}\le
\frac{|h^{2^n}(z)-h^{2^n}(w)|}{1-\max\{|h(z)|^{2^n},|h(w)|^{2^n}\}},
$$

we conclude that

$$
j(h^{2^{n+1}}(z), h^{2^{n+1}}(w))\le j(h^{2^n}(z), h^{2^n}(w)),
$$

i.e.,

$$
C(2^{n+1},a)=\sup_{z,w\in\mathbb D}\frac{j(h^{2^{n+1}}(z),
h^{2^{n+1}}(w))}{j(z,w)}\le \sup_{z,w\in\mathbb
D}\frac{j(h^{2^n}(z), h^{2^n}(w))}{j(z,w)}=C(2^n,a).
$$

\end{proof}

\bigskip

Q2. {\em  Is it true that $C(m,(m+1)^{-1})=1$ for $m\ge 2$?}

\bigskip

\bigskip
{\bf Acknowledgement.} The research of the second author was supported by the Academy of Finland grant with the
Project number 2600066611.


\end{document}